\newtheorem{theorem}{Theorem}
\newtheorem{lemma}[theorem]{Lemma}
\newtheorem{corollary}[theorem]{Corollary}
\newenvironment{proof}
{\textit{\bf Proof:} }
{$\hfill\square$\\ \\}
\newenvironment{proofofthm}[1]{\textit{\bf Proof of theorem #1:} }{$\hfill\square$\\ \\}
\newcommand{\given}{{\;|\;}}
\newcommand{\prob}{\mathbf{P}}
\newcommand{\E}{\mathbf{E}}
\newcommand{\St}{\widetilde{S}}
\newcommand{\supp}{{\rm support}}
\newcommand{\degree}{{\rm degree}}
\def \half {{\textstyle \frac{1}{2}}}
\def \ind {{ \mathbf{1}}}
\def \sets {{ \mathcal S}}
\def \P {{ \mathbf{P}}}
\def \Z {{ \mathbf{Z}}}
\def \tower {{\uparrow \uparrow}}
\def \diam {{\rm diameter}}
\colorlet{droplet color}{black}
\tikzset{%
	raindrop/.pic={
		code={\tikzset{scale=1/10}
			\shade [shading=droplet]
			(0,0)  .. controls ++(0,-1) and ++(0,1) .. (1,-2)
			arc (360:180:1)
			.. controls ++(0,1) and ++(0,-1) .. (0,0) -- cycle;
}}}
\DeclarePairedDelimiter{\ceil}{\lceil}{\rceil}
\title{Transience of Simple Random Walks With Linear Entropy Growth}
\author[1]{Ben Morris}
\author[2]{Hamilton Samraj Santhakumar}
\affil[1]{Department of Mathematics, University of California, Davis}
\affil[2]{Department of Mathematics, University of California, Davis}
\date{February 15, 2023}
\begin{document}

\maketitle

\begin{abstract}
Using the technique of evolving sets, we explore the connection between entropy growth and transience for simple random walks on connected
  infinite graphs with bounded degree. In particular we show that for a simple random walk starting at a vertex $x_0$, if the entropy after $n$ steps, $E_n$
  is at least $Cn$ where the $C$ is independent of $x_0$, then the random walk is transient. We also give an example which demonstrates that the condition
  of $C$ being independent of $x_0$ is necessary.
\end{abstract}

\section{Introduction}
Let $X_1, X_2, \dots$ be simple random walk on an infinite graph of maximum degree $d$ and let $p$ be
its transition matrix. Let $V$ denote the set of vertices of this graph.
The main result of this paper is Theorem \ref{mainthm} given below which gives a connection between the entropy growth of the random walk 
and its transience.

\begin{theorem}\label{mainthm}
  Suppose that $X_0 = x_0$. Let $E_n$ be the entropy of $X_n$, i.e., $E_n = -\sum_{x\in V} \prob(X_n=x)\log\big(\prob(X_n=x)\big)$. If
	\begin{equation}\label{linearENT}
	E_n\geq Cn  
	\end{equation}
	for some $C$ independent of $x_0$, then the random walk is transient.
      \end{theorem}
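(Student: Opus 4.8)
The plan is to run the argument through the evolving-set process $S_0, S_1, \dots$ associated with the (reversible) simple random walk, with $S_0 = \{x_0\}$ and reversing measure $\pi(y) = \degree(y) \in [1,d]$. The first step is the Morris--Peres duality $p^{(n)}(x_0,y) = \frac{\pi(y)}{\pi(x_0)}\,\prob_{\{x_0\}}(y \in S_n)$, which at $y = x_0$ reads $p^{(n)}(x_0,x_0) = \prob_{\{x_0\}}(x_0 \in S_n)$. Since $x_0 \in S_n$ forces $\pi(S_n) \ge \pi(x_0)$, this yields $p^{(n)}(x_0,x_0) \le \E_{\{x_0\}}[\sqrt{\pi(S_n)}]/\sqrt{\pi(x_0)}$. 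Writing $\psi(S) = \E[\sqrt{\pi(S_1)}\mid S_0 = S]/\sqrt{\pi(S)} \le 1$ for the growth gauge and passing to the size-biased (Doob-transformed) measure $\widehat{\prob}$ for which $\pi(S_n)/\pi(x_0)$ is the change of measure, the martingale property of $\pi(S_n)$ telescopes this into $p^{(n)}(x_0,x_0) \le \widehat{\E}_{\{x_0\}}\big[\prod_{k=0}^{n-1}\psi(S_k)\big]$. Thus transience reduces to showing this expected product is summable in $n$, for which geometric decay suffices.

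Second, I would connect the hypothesis to the gauges. The same duality gives a coupling in which the law of $X_n$ is the mixture $\widehat{\E}_{\{x_0\}}[\nu_{S_n}]$, where $\nu_S$ is the $\pi$-uniform distribution on $S$; bounded degree makes the entropy of $\nu_S$ equal to $\log\pi(S)$ up to an additive $\log d$, and forces each one-step increment $E_{n+1}-E_n$ to be at most $\log d$. The target is a per-step inequality bounding the entropy production $E_{n+1}-E_n$ by a constant multiple of a gauge defect such as $\widehat{\E}[-\log\psi(S_n)]$: a set with $\psi$ near $1$ has negligible boundary flow, so one walk step barely moves the near-stationary $\nu_{S_n}$ and produces little entropy. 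Summing such an estimate against $E_n \ge Cn$ should force the gauges to sit away from $1$ often enough that $\prod_k \psi(S_k)$ collapses.

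The hard part is exactly this bridge, and two genuine traps must be avoided. First, static comparisons run the wrong way: Shannon entropy dominates collision entropy, so $E_n \ge -\log p^{(2n)}(x_0,x_0) - \log d$, meaning large entropy by itself only \emph{lower}-bounds the return probability; the dynamics of the evolving set, not the marginal law of $X_n$, must carry the argument. Second, in the mixture decomposition $E_n = \widehat{\E}[\log\pi(S_n)] + I(X_n;S_n) + O(\log d)$ the generated entropy could a priori be stored in the mutual-information term $I(X_n;S_n)$ -- the randomness of \emph{which} set occurs -- rather than in the growth of $\pi(S_n)$, and a naive Jensen step on the product of gauges likewise runs backwards. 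This is precisely where uniformity of $C$ in $x_0$ is essential: by the Markov property it upgrades to $H(X_{m+n}\mid X_m)\ge Cn$ for \emph{every} $m$, so the entropy-production lower bound is available afresh from whatever vertex the walk currently occupies. I would use this restart property to iterate the per-step estimate along the trajectory and convert an averaged gauge bound into the trajectory-wise geometric decay of $\prod_k \psi(S_k)$; controlling the lower tail of $\pi(S_n)$ here (equivalently, ruling out entropy hiding in $I(X_n;S_n)$) is the crux of the whole proof.
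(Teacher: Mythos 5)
Your setup (evolving sets with $\pi = \degree$, the duality $p^n(x_0,y) = \frac{\pi(y)}{\pi(x_0)}\prob(y \in S_n)$, and the goal of geometric decay of $\E[\sqrt{\pi(S_n)}]$) matches the paper's skeleton, and you correctly identify both where uniformity of $C$ in $x_0$ must enter and that the whole difficulty is the bridge from the entropy hypothesis to growth of the evolving set. But that bridge is exactly what your proposal does not supply: the ``target per-step inequality'' bounding $E_{n+1}-E_n$ by a multiple of the gauge defect $-\log\psi(S_n)$ is stated as an aim, never proved, and you yourself flag the crux (ruling out entropy hiding in the mutual information / controlling the lower tail of $\pi(S_n)$) as open. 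So as it stands this is a plan with the central lemma missing, not a proof.

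Moreover, the single-step formulation you chose makes that missing lemma essentially unprovable, and the paper's fix is worth internalizing. The quantity the entropy hypothesis actually controls is elementary and static: by concavity of $x \mapsto -x\log x$ (equivalently the entropy chain rule), $E_n \geq Cn$ forces, for \emph{every} set $A$ and every start $x$,
\begin{equation*}
p^n(x,A^c) \;\geq\; \frac{Cn - \log 2|A|}{n\log d},
\end{equation*}
which summed over $x \in S$ against $\pi$ gives $Q_n(S,A^c) \geq \pi(S)\,\frac{Cn-\log 2|A|}{n\log d}$ (this is where uniformity of $C$ is used, and it is the only place). Note this bound is vacuous unless $n \gtrsim \log|A|/C$: one step of the walk cannot leak a constant fraction of mass out of a large set, so no useful per-step gauge estimate can follow from the hypothesis. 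The paper therefore runs an \emph{intermittent} evolving set process, updating only at times $T_m$ with gaps $L_m \asymp \log(8\pi(S_{T_{m-1}}))/C$ chosen as a function of the current set; with that choice the escape bound becomes the constant $\pi(S)\,\frac{C}{2\log d}$, which lower-bounds $\E\bigl(R\,\ind(R\geq 4)\bigr)$ for $R = \pi(S_{T_{m+1}})/\pi(S_{T_m})$, and a concavity lemma for $\sqrt{\cdot}$ then yields the uniform contraction $\E[\sqrt{\pi(S_{T_{m+1}})} \mid S_{T_m}] \leq (1 - \frac{C}{16\log d})\sqrt{\pi(S_{T_m})}$. The growing time gaps cost only a factor $L_m \lesssim \sqrt{\pi(S_{T_m})}$ in the Green's function bound, which the $\sqrt{\pi}$ contraction absorbs. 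To repair your argument, replace the per-step entropy-production scheme (and the Doob-transform bookkeeping, which the paper never needs) with this two-part structure: the concavity lemma converting $E_n \geq Cn$ into a uniform escape probability, and the set-dependent time rescaling that makes that escape probability a constant.
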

This problem was suggested to us by Itai Benjamini.\\

 Note that the entropy defined in the above theorem is a finite sum since each vertex has finite degree and hence the
support of $X_n$ is finite. So the entropy makes perfect sense and there is no question of convergence. Henceforth whenever
the inequality $(\ref{linearENT})$ holds for some $C$ independent of the starting position, we will say that the $\textit{linear entropy condition}$ holds.
To see why the bounded degree assumption is necessary, note
that if we append $b_k$ pendant edges to vertex $k$ in $\Z^+$ (the graph of positive integers)
then the resulting graph is still recurrent, and if the sequence $b_k$ grows
fast enough then the walk satisfies the linear growth condition.  
In Section \ref{Example} we give an example (told to us by Gady Kozma) to show that the theorem fails to hold if the $C$ in (\ref{linearENT}) is not independent of the starting
position of the random walk.

\section{Entropy and the Probability of Escape in $n$ Steps}
\subsection{Background and notation}
For any set $V'\subset V$,
define $p^n(x, V') := \sum_{v\in V'}p^n(x,v)$, with a similar definition for $p(V', x)$.
For real numbers $a$ and $b$, define $a \wedge b := \min(a,b)$.
Our proof of Theorem \ref{mainthm} uses a set valued process called the evolving set process, which was
used in \cite{evolvingsets} to obtain bounds on mixing times of Markov chains in terms of isoperimetric inequaltiies. The notion of evolving sets is related to strong stationary duals introduced by Fill and Diaconis \cite{strongstatdual}. Before we introduce evolving sets, let's make the following observation. Recall that $d$ is the maximum degree of $G$.
\begin{lemma}
  Assume that the linear entropy growth condition holds and that $X_0=x_0$. Then for any set $A\subset V$
  and integer $n \geq 1$, we have 
	\begin{equation}\label{ProbLeakage}
	p^n(x_0,A^c)\geq \frac{Cn - \log2|A|}{n\log d}.
	\end{equation}
\end{lemma}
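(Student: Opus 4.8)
The plan is to bound the entropy $E_n$ from above in terms of the escape probability $p^n(x_0, A^c)$ and then solve for that probability. The key idea is to split the support of $X_n$ into two pieces according to whether the walk landed inside $A$ or inside $A^c$, and to control the entropy contribution of each piece separately.

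First I would write $E_n = -\sum_{x} p^n(x_0,x)\log p^n(x_0,x)$ and partition the sum over $x \in A$ and $x \in A^c$. For the contribution from $A^c$, the crude bound is to observe that the support of $X_n$ at distance $n$ from $x_0$ has size at most $d^n$ (each step has at most $d$ choices), so the entropy of a distribution supported on at most $d^n$ points is at most $\log(d^n) = n\log d$; weighting by the mass $p^n(x_0,A^c)$ landing in $A^c$ gives a contribution of at most $p^n(x_0,A^c)\cdot n\log d$, plus a small correction term of the usual form coming from the splitting. For the contribution from $A$, the distribution restricted to $A$ lives on a set of size $|A|$, so its entropy is at most $\log|A|$; again weighting by the mass in $A$, which is at most $1$, gives at most $\log|A|$, plus the same kind of splitting correction.

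The splitting correction comes from the standard decomposition of entropy: writing $q = p^n(x_0,A^c)$ for the escape mass, we have the bound $E_n \le H(q) + q\,n\log d + (1-q)\log|A|$, where $H(q) = -q\log q - (1-q)\log(1-q) \le \log 2$ is the binary entropy of the split. Combining with the hypothesis $E_n \ge Cn$ yields $Cn \le \log 2 + q\,n\log d + \log|A|$, and since $(1-q)\log|A| \le \log|A|$ the $\log 2|A|$ grouping is natural. Solving the resulting inequality $Cn - \log 2|A| \le q\,n\log d$ for $q$ gives exactly $q = p^n(x_0,A^c) \ge \dfrac{Cn - \log 2|A|}{n\log d}$.

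The main obstacle, and the step deserving the most care, is getting the support-size bound for the $A^c$ portion right and ensuring the binary entropy term is accounted for cleanly. The $d^n$ bound on the support is where the bounded-degree assumption enters; one must check that the restricted-and-renormalized distributions really do have the claimed support sizes and that the entropy subadditivity across the two-way split introduces only the $\log 2$ term. None of the individual estimates is deep, but bookkeeping the constants so that the final bound matches $(\ref{ProbLeakage})$ exactly, rather than with a looser constant, is the part I would verify most carefully.
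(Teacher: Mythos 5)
Your proposal is correct and follows essentially the same route as the paper: split the entropy according to $A$ versus $A^c$, bound the two-way splitting cost by $\log 2$, use $|\supp(X_n)| \leq d^n$ for the $A^c$ piece and $\log|A|$ for the $A$ piece, then solve for $q = p^n(x_0,A^c)$. The paper derives its key inequality via concavity of $x \mapsto -x\log x$ but explicitly notes it also follows from the entropy chain rule, which is exactly the decomposition you use; your ordering of the bounds (inserting $n\log d$ before rearranging) is if anything slightly cleaner, since it avoids dividing by $\log|A^c| - \log|A|$.
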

\begin{proof}
  Fix $A \subset V$ and define $A^c := \supp(X_n) \setminus A$.
  For $i \in V$ define $p_i := p^n(x_0, i)$ and define
  \[
    q := p^n(x_0, A^c) = \sum_{i \in A^c} p_i
  \]

  Note that, by concavity of the function $x \mapsto -x \log x$, the average value of $-p_i \log p_i$ over the set $A$ is at most
  $-{1-q \over |A|} \log{1-q \over |A|}$. Furthermore, the average value of $-p_i \log p_i$ over the set $A^c$ is at most
  $-{q \over |A^c| }\log{q \over |A^c|}$. Hence the entropy satisfies
       \begin{align}
\nonumber         E_n &\leq - (1 - q) \log {1 - q \over |A|}  - q  \log {q \over |A^c|}    \\
                      &= [-(1-q) \log(1-q) - q \log q] + (1-q) \log |A| + q \log |A^c| \,.  \label{chain}
       \end{align}
       (Equation (\ref{chain}) also follows from the entropy chain rule.) By concavity
       of $x \mapsto -x \log x$, the term in square brackets is at most $\log 2$. Combining this with the linear
       entropy growth condition gives
       \[
       Cn \leq  E_n \leq \log 2 + q( \log |A^c| - \log |A|) + \log |A|.
       \]
       Rearranging terms gives
       \[
         q \geq {Cn - \log |A| - \log 2  \over \log |A^c| - \log |A|} \,.
         \]
         Since $|A^c| \leq |\supp(X_n)| \leq d^n$ and $\log |A| \geq 0$, the lemma follows.
\end{proof}

Suppose that $\pi$ is a stationary measure
for the transition kernel $p$. 
For integers $t \geq 0$ define $Q_t(x,y)=\pi(x) p^t (x,y)$. For $B\subset V$, define
$Q_t(B,y) := \sum_{x\in B} Q_t (x,y)$. Then we have the following corollary.

\begin{corollary} \label{leave}
  For any finite set $S \subset V$ we have
  \[
    Q_n(S, A^c) \geq \pi(S) \frac{Cn - \log2|A|}{n\log d}.
  \]
  \end{corollary}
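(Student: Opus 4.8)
The plan is to obtain the Corollary from the Lemma by a single averaging step, so the only substantive point to isolate is the one that makes that averaging clean. That point is uniformity of the constant over starting vertices: the linear entropy condition is assumed to hold for some $C$ \emph{independent} of the starting position, i.e. $E_n \geq Cn$ with one common $C$ for every choice of $X_0$. Consequently, re-running the Lemma's argument with $X_0 = x$ in place of $x_0$ yields, for \emph{every} vertex $x \in V$,
\[
p^n(x, A^c) \;\geq\; \frac{Cn - \log 2|A|}{n\log d},
\]
where, exactly as in the Lemma, $p^n(x, A^c) = p^n(x, V\setminus A)$ since vertices outside $\supp(X_n)$ carry no mass. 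I read the left-hand side of the Corollary as the $\pi$-weighted probability of leaving $A$, namely the quantity $Q_n(S,A^c)$ that the Lemma actually controls.

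With this in hand the Corollary is immediate. Unwinding the definitions $Q_n(x,y)=\pi(x)p^n(x,y)$ and $Q_n(S,A^c)=\sum_{x\in S}\sum_{y\in A^c}Q_n(x,y)$ gives $Q_n(S, A^c) = \sum_{x\in S}\pi(x)\,p^n(x,A^c)$. Since the lower bound above does not depend on $x$, I would substitute it into each summand and factor it out, leaving the bare sum $\sum_{x\in S}\pi(x) = \pi(S)$:
\[
Q_n(S, A^c) \;\geq\; \frac{Cn - \log 2|A|}{n\log d}\sum_{x\in S}\pi(x) \;=\; \pi(S)\,\frac{Cn - \log 2|A|}{n\log d}.
\]
Finiteness of $S$ guarantees that this is an honest finite sum and that $\pi(S)<\infty$, so no convergence issue arises.

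There is essentially no obstacle beyond the uniformity remark: once the Lemma is known to hold from every starting vertex with a single common $C$, the Corollary is just a weighted average. The one detail deserving a moment's care is the sign of the numerator $Cn - \log 2|A|$. When it is negative, the claimed bound is negative while the left-hand side is a nonnegative $\pi$-weighted probability, so the inequality holds trivially; when it is nonnegative, the factoring argument above applies verbatim. Thus the delicate estimate has already been carried out in the Lemma, and what remains for the Corollary is bookkeeping.
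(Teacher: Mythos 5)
Your proof is correct and matches the paper's (implicit) argument: the corollary is stated without proof precisely because it follows from the Lemma applied from each starting vertex $x \in S$ — which requires exactly the uniformity of $C$ you highlight — followed by weighting with $\pi(x)$ and summing over the finite set $S$. Your reading of the left-hand side as $Q_n(S, A^c)$ is also the intended one, since that is how the corollary is invoked later (in the bound on $Q_{L_m}(S, S_*^c)$ in the proof of Theorem \ref{CondDecay}).
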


\section{The Evolving Set Process}\label{EvolvingDef}

\textbf{Definition (Evolving Sets).}
We now define a process on subsets of $V$, which is similar to the evolving sets of \cite{evolvingsets}, except that
here the sets are only defined at certain times $T_0 \leq T_1 \leq T_2 \leq \cdots$.
Fix (the starting state) $S \subset V$. We define a nonnegative integer valued process $T_0, T_1, \ldots$ and a set valued
process $S_{T_0}, S_{T_1}, \ldots$ inductively as follows.
Let $U_1, U_2, \ldots$ be i.i.d~Uniform$[0,1]$ random variables.
Set $T_0 = 0$ and $S_{T_0}=S$.
Now assuming $T_{m-1}$ and $S_{T_{m-1}}$ are given, define $T_{m}= T_{m-1} + L_m$, where $L_1, L_2, \ldots$ is a positive integer valued process
such that $L_j$ is a function of $S_{T_{j-1}}$. Then define
\[
  S_{T_m} =  \{y: Q_{L_m}(S_{T_{m-1}},y)\geq U_m \pi(y)\}.
\]
The construction used here is similar to the one in $\cite{evolvingsets}$. However, here we
consider many steps of Markov chain at  a time. 
We shall call the process $S_{T_0}, S_{T_1}, \ldots$ the {\it intermittant evolving set process.} \\

Note that the intermittant evolving set process
depends on the sequence of random variables $L_1, L_2, \ldots$ (the ``time gaps'').
The results in this section are true for any choice of time gaps.
(From Section \ref{RelatingTransienceandEvolvingSets} onwards, we choose a particular sequence $\big( L_j\big)_{j\geq 1}$.) For every integer $t\geq 0$, define 
\begin{equation*}
	a(t) = \max \{i\ : \ T_i \leq t \}.
\end{equation*}
Then we have the following lemma.


\begin{lemma}\label{ProbPowersLemma}
	For every nonnegative integer $t$, we have
	\begin{equation}
	Q_t(S,y) = \E \big[ Q_{t-T_{a(t)}}(S_{T_{a(t)}},y) \big].
	\end{equation}
\end{lemma}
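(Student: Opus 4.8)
The plan is to reduce the identity to a single-step relation for the intermittent evolving set process and then propagate it through the random times $T_0, T_1, \ldots$ by a tower/martingale argument. First I would establish the one-step identity: for $m \geq 1$ and any integer $s \geq 0$,
\[
\E\big[ Q_s(S_{T_m}, y) \given S_{T_{m-1}} \big] = Q_{L_m + s}(S_{T_{m-1}}, y).
\]
This follows directly from the definition of $S_{T_m}$. Since $U_m$ is Uniform$[0,1]$ and, by stationarity of $\pi$, the ratio $Q_{L_m}(S_{T_{m-1}}, y)/\pi(y) \leq 1$, we get $\prob(y \in S_{T_m} \given S_{T_{m-1}}) = Q_{L_m}(S_{T_{m-1}}, y)/\pi(y)$, hence $\E[\pi(y)\ind_{y \in S_{T_m}} \given S_{T_{m-1}}] = Q_{L_m}(S_{T_{m-1}}, y)$. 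Writing $Q_s(S_{T_m}, y) = \sum_z \pi(z)\ind_{z \in S_{T_m}} p^s(z,y)$ and taking the conditional expectation termwise, then using $\sum_z p^{L_m}(x,z)p^s(z,y) = p^{L_m + s}(x,y)$, yields the displayed identity.

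Next I would set up the filtration $\mathcal{F}_m = \sigma(U_1, \ldots, U_m)$ and record the measurability facts needed. Because each $L_j$ is a function of $S_{T_{j-1}}$, and $S_{T_{j-1}}$ is determined by $U_1, \ldots, U_{j-1}$, the time $T_m$ and gap $L_m$ are $\mathcal{F}_{m-1}$-measurable while $S_{T_m}$ is $\mathcal{F}_m$-measurable. In particular the event $\{T_m \leq t\} = \{a(t) \geq m\}$ lies in $\mathcal{F}_{m-1}$, so it is legitimate to substitute the random shift $s = t - T_m$ into the one-step identity on this event; doing so gives
\[
\E\big[ Q_{t - T_m}(S_{T_m}, y) \given \mathcal{F}_{m-1} \big] = Q_{t - T_{m-1}}(S_{T_{m-1}}, y) \quad \text{on } \{T_m \leq t\}.
\]
Since $L_j \geq 1$ forces $T_m \geq m$, we also have $a(t) \leq t$, so $a(t)$ is bounded.

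Finally, fixing $t$ and writing $\tau = a(t)$, I would consider the stopped sequence $R_m := Q_{t - T_{m \wedge \tau}}(S_{T_{m \wedge \tau}}, y)$. On $\{m > \tau\}$ we have $R_m = R_{m-1}$ trivially, while on $\{m \leq \tau\} = \{T_m \leq t\}$ the conditional identity above gives $\E[R_m \given \mathcal{F}_{m-1}] = R_{m-1}$; hence $(R_m)_{m \geq 0}$ is a martingale with $R_0 = Q_{t - T_0}(S_{T_0}, y) = Q_t(S, y)$. Taking expectations gives $\E[R_m] = Q_t(S,y)$ for every $m$, and since $\tau \leq t$ we have $R_m = Q_{t - T_{a(t)}}(S_{T_{a(t)}}, y)$ for all $m \geq t$, which yields the claim. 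I expect the main obstacle to be the bookkeeping around the random stopping time $a(t)$: because the gaps $L_j$ are themselves random functions of the evolving set, one must verify carefully that $\{T_m \leq t\}$ is $\mathcal{F}_{m-1}$-measurable before substituting the random shift $s = t - T_m$ into the one-step identity, and one must confirm that the stopped process is genuinely a martingale rather than merely satisfying the identity on a fixed-time slice.
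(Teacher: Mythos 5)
Your proof is correct and is essentially the paper's argument in martingale dress: your stopped process $R_m$ is exactly the paper's quantity $Q_{t-T_{a_m(t)}}(S_{T_{a_m(t)}},y)$ with $a_m(t) := a(t)\wedge m$, and your one-step identity (uniform variable plus stationarity giving $\E[\pi(y)\ind(y\in S_{T_m})\mid S_{T_{m-1}}]=Q_{L_m}(S_{T_{m-1}},y)$, then Chapman--Kolmogorov) is precisely the computation in the paper's displays leading to its equation for $Q_{t-j}(S,y)$. The paper telescopes $\E[R_{i+1}]=\E[R_i]$ by conditioning on $(T_i,S_{T_i})$ with the same case split on whether $T_{i+1}\leq t$, which is just your filtration argument made by hand, so the two proofs coincide in substance.
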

\begin{proof}
For integers $n$ with $0 \leq n \leq t$, 
define $a_n(t) := a(t) \wedge n$. Note that $a_0(t) = 0$. Furthermore, since $T_t \geq t$, we have $a(t) \leq t$
and hence $a_t(t) = a(t)$. Thus, it is enough to show that for every $i$ with $0 \leq i < t$, we have
\begin{equation}
  \label{toshow}
  \E \big[ Q_{t-T_{a_{i+1}(t)}}(S_{T_{a_{i+1}(t)}},y) \big] =
  \E \big[ Q_{t-T_{a_{i}(t)}}(S_{T_{a_{i}(t)}},y) \big] \,.
\end{equation}
Fix $i$ with $0 \leq i < t$.
We shall prove (\ref{toshow}) by conditioning on the values of
$T_i$ and $S_{T_i}$.
  That is, we will show that for every $j$ with $1 \leq j \leq t$
  and $S \subset V$, we have
\begin{equation}
  \label{toshowtwo}
  \E \big[ Q_{t-T_{a_{i+1}(t)}}(S_{T_{a_{i+1}(t)}},y) \given T_i = j, S_{T_i} = S \big] =
\E \big[ Q_{t-T_{a_{i}(t)}}(S_{T_{a_{i}(t)}},y)
\given T_i = j, S_{T_i} = S \big] 
  \,.
\end{equation}
So now fix $j$ and $S$, and suppose that $T_i = j$ and $S_{T_{i}} = S$.
Recall that $L_{i+1}$ is a function $S_{T_{i}}$.
If $T_{i+1} = j + L_{i+1} > t$ then 
$a_{i}(t) = a_{i+1}(t) = i$ and hence (\ref{toshowtwo}) is trivially true,
so assume $T_{i+1} \leq t$. Then $a_{i+1}(t) = i+1$
and $a_i(t) = i$.
So in this case 
we
just have to show that 
\begin{equation}
  \label{toshowthree}
  \E \big[ Q_{t-T_{i+1}}(S_{T_{i+1}},y) \given T_i = j, S_{T_i} = S \big] =
  Q_{t-j}(S ,y)  \,.
\end{equation}
But
\begin{eqnarray}
  \E \big[ Q_{t-T_{i+1}}(S_{T_{i+1}},y) \given T_i = j, S_{T_i} = S \big]   \nonumber
  &=&
      \sum_{x \in V}
      \prob \big[ x \in S_{T_{i+1}} \given T_i = j, S_{T_i} = S \big]
      Q_{t - T_{i+1}} (x, y)      \nonumber \\
  &=&  
      \sum_{x \in V}
      \prob \big[ x \in S_{T_{i+1}} \given T_i = j, S_{T_i} = S \big]
      \pi(x) p^{t - T_{i+1}} (x, y)     \nonumber  \\
  &=&
      \sum_{x \in V}
      Q_{L_{i+1}}(S, x) p^{t - T_{i+1}} (x, y),   \label{seven}
\end{eqnarray}
where the last line holds by the definition of the intermittant
evolving set process. Note that for each $s \in S$, we have
\begin{eqnarray}
  \sum_{x \in V}
  Q_{L_{i+1}}(s, x) p^{t - T_{i+1}} (x, y)
  &=&
                                              \sum_{x \in V}    \pi(s) p^{L_{i+1}}(s, x)
                                               p^{t - T_{i+1}}(x, y) \nonumber  \\
&=&
                                               \pi(s) 
                                               p^{t + L_{i+1}- T_{i+1}}(s, y) \nonumber   \\
&=&                                            
                                              Q_{t - T_{i}}(s, y),     \label{it}
\end{eqnarray}
where the last line holds because $T_{i+1} = T_i + L_{i+1}$. Finally, note that combining (\ref{it}) with (\ref{seven}) yields
(\ref{toshowthree}).
\end{proof}
\begin{corollary} \label{ProbPowers}
If the initial state of the intermittant evolving set process is $S=\{x_0\}$, then
	\begin{equation*}
	P^{t}(x_0,y) = \frac{1}{\pi(x_0)}\E \big[ Q_{t-T_{a(t)}}(S_{T_{a(t)}},y) \big].
	\end{equation*}
\end{corollary}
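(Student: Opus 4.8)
The plan is to obtain this corollary as an immediate specialization of Lemma \ref{ProbPowersLemma}, setting the initial state of the intermittant evolving set process to the singleton $S = \{x_0\}$. The only genuine work is to evaluate the left-hand side $Q_t(S,y)$ of that lemma for this choice of $S$ and then to divide through by $\pi(x_0)$; the substantive content of the statement is already carried entirely by Lemma \ref{ProbPowersLemma}.

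First I would unwind the definitions. Recall that $Q_t(x,y) = \pi(x)\, p^t(x,y)$, and that for a set $B \subset V$ one sets $Q_t(B,y) = \sum_{x \in B} Q_t(x,y)$. Taking $B = \{x_0\}$ collapses the sum to a single term, giving $Q_t(\{x_0\}, y) = \pi(x_0)\, p^t(x_0, y)$. Next I would apply Lemma \ref{ProbPowersLemma} with $S = \{x_0\}$: its left-hand side is exactly the quantity just computed, so the lemma yields
\[
  \pi(x_0)\, p^t(x_0, y) = \E\big[ Q_{t-T_{a(t)}}(S_{T_{a(t)}},y) \big].
\]
Dividing both sides by $\pi(x_0)$ then produces the stated identity (with $P^t = p^t$).

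The division step is the only point requiring a word of justification, and it is not really an obstacle: for simple random walk on a connected graph the stationary measure is proportional to the vertex degrees, hence strictly positive at every vertex, so $\pi(x_0) > 0$ and the division is legitimate. I do not expect any genuine difficulty in this argument, since all of the measure-theoretic and conditioning subtleties have already been handled inside the proof of Lemma \ref{ProbPowersLemma}; the corollary is purely a matter of specializing and rescaling.
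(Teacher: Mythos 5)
Your proposal is correct and matches the paper's reasoning exactly: the paper states this corollary without proof precisely because it is the specialization of Lemma \ref{ProbPowersLemma} to $S = \{x_0\}$, using $Q_t(\{x_0\},y) = \pi(x_0)\,p^t(x_0,y)$ and dividing by $\pi(x_0) > 0$. Your extra remark justifying the positivity of $\pi(x_0)$ (the stationary measure used is the degree measure, which is at least $1$ on every vertex) is a sensible touch that the paper leaves implicit.
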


\section{Relating Transience and Evolving Sets}\label{RelatingTransienceandEvolvingSets}

Corollary \ref{ProbPowers} is useful since it will help us to prove an upper bound on  $\sum_{t\geq 0} P^t(x_0,y)$,
which implies transience. In Lemma \ref{transiencelemma} below we
relate this sum to $\sum_{i=0}^\infty \E \big[\sqrt{\pi(S_{T_i})}\big]$, for a specific choice of
$\pi$ and $(L_j)_{j\geq 1}$. Henceforth we will fix $\pi$ and $(L_j)_{j\geq 1}$ to be the following :
\begin{equation}
  \label{dagger}
	\pi(x) = \degree(x); \hspace{1 cm}
	L_m = 2 \bigg \lceil   {\log( 8 \cdot \pi (S_{T_{m-1}})) \over C}  \bigg \rceil \mbox{ for $m\geq 1$},
      \end{equation}
where $C$ is the constant appearing in the entopy growth condition.       
\begin{lemma}\label{transiencelemma}
  Let $\pi$ and $L_m$ be as in (\ref{dagger}). 
  Then for any $y\in V$, we have
	\begin{equation}
	\sum_{t=0}^\infty P^t(x_0,y)\leq 8d\bigg\lceil\frac{1}{C}\bigg\rceil \sum_{i=0}^\infty \E \Big[\sqrt{\pi(S_{T_i})}\Big].
	\end{equation}
\end{lemma}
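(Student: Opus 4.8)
The plan is to sum the pointwise identity of Corollary \ref{ProbPowers} over all $t$ and regroup the terms according to the value of $a(t)$. Applying that corollary gives
\[
\sum_{t=0}^\infty P^t(x_0,y)=\frac{1}{\pi(x_0)}\sum_{t=0}^\infty\E\big[Q_{t-T_{a(t)}}(S_{T_{a(t)}},y)\big].
\]
For a fixed realization of the time gaps, $a(t)=i$ exactly when $T_i\le t<T_{i+1}$, that is, for the $L_{i+1}$ integers $t\in\{T_i,\dots,T_{i+1}-1\}$; setting $s=t-T_i$ turns this into the pathwise identity
\[
\sum_{t=0}^\infty Q_{t-T_{a(t)}}(S_{T_{a(t)}},y)=\sum_{i=0}^\infty\sum_{s=0}^{L_{i+1}-1}Q_s(S_{T_i},y).
\]
Since every term is nonnegative, Tonelli's theorem lets me move the expectation inside the sum over $i$, so it suffices to bound $\E\big[\sum_{s=0}^{L_{i+1}-1}Q_s(S_{T_i},y)\big]$ for each fixed $i$.

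The key step is a uniform bound on $Q_s(S,y)$ that depends on neither $s$ nor $S$ nor $y$. Using that simple random walk is reversible with respect to $\pi=\degree$, i.e.\ $\pi(x)p^s(x,y)=\pi(y)p^s(y,x)$, I would write
\[
Q_s(S,y)=\sum_{x\in S}\pi(x)p^s(x,y)=\pi(y)\sum_{x\in S}p^s(y,x)=\pi(y)\,p^s(y,S)\le\pi(y)\le d.
\]
Conditioning on $S_{T_i}=S$, this immediately gives $\sum_{s=0}^{L_{i+1}-1}Q_s(S,y)\le d\,L_{i+1}$, with no residual dependence on $y$ or $s$.

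It then remains to convert the length $L_{i+1}=2\lceil\log(8\pi(S))/C\rceil$ into the target factor $\sqrt{\pi(S)}$, and this is exactly where the choice of $L_m$ in (\ref{dagger}) pays off. I would verify the elementary inequality $\lceil\log(8u)/C\rceil\le 4\lceil 1/C\rceil\sqrt u$ for all $u\ge1$, which follows from $\lceil ab\rceil\le\lceil a\rceil\lceil b\rceil$ for positive $a,b$ together with $\log(8u)+1\le 4\sqrt u$ (the function $4\sqrt u-\log(8u)$ is increasing on $[1,\infty)$ with value $>1$ at $u=1$). Applying it with $u=\pi(S)\ge1$ yields $d\,L_{i+1}\le 8d\lceil 1/C\rceil\sqrt{\pi(S)}$, and dividing by $\pi(x_0)\ge1$ gives
\[
\frac{1}{\pi(x_0)}\,\E\Big[\sum_{s=0}^{L_{i+1}-1}Q_s(S_{T_i},y)\Big]\le 8d\Big\lceil\tfrac1C\Big\rceil\,\E\big[\sqrt{\pi(S_{T_i})}\big].
\]
Summing over $i$ then produces the claimed bound.

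The only genuinely delicate points are the bookkeeping in the regrouping step (checking that $\{t:a(t)=i\}$ has exactly $L_{i+1}$ elements, which relies on $L_m\ge1$) and matching the constant in the elementary inequality so that $8d\lceil 1/C\rceil$ emerges precisely. The conceptual crux, however, is the reversibility identity $Q_s(S,y)\le d$: it decouples the bound from both the time index $s$ and the current set $S$, reducing the inner sum to $d\,L_{i+1}$, after which the argument is routine.
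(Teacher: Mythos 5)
Your proof follows essentially the same route as the paper's: sum Corollary \ref{ProbPowers} over $t$, regroup according to $a(t)=i$ (each $i$ contributing $T_{i+1}-T_i=L_{i+1}$ terms), bound $Q_s(\cdot,y)$ by $\pi(y)\le d$, and convert $L_{i+1}$ into $\sqrt{\pi(S_{T_i})}$ via the elementary inequality $\lceil\log(8u)\rceil\le 4\sqrt{u}$. Your derivation of $Q_s(S,y)\le\pi(y)$ from reversibility is a harmless variant of the paper's argument $Q_s(S,y)\le Q_s(V,y)=\pi(y)$, which uses only stationarity and monotonicity in the set.

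There is, however, a genuine gap at the last step: the inequality $d\,L_{i+1}\le 8d\lceil 1/C\rceil\sqrt{\pi(S_{T_i})}$ is false on the event $\{S_{T_i}=\emptyset\}$, where the right-hand side vanishes while $L_{i+1}\ge 1$ (and the defining formula for $L_{i+1}$ is not even meaningful there, since $\log(8\cdot 0)=-\infty$). This event is not a negligible edge case; it is the generic one. Since $\pi(S)\ge 1$ for every nonempty $S$, we have $\E\sqrt{\pi(S_{T_i})}\ge\P(S_{T_i}\neq\emptyset)$, so the exponential decay established later (Corollary \ref{maincor}) --- and indeed any hope of a finite right-hand side in this lemma --- forces $\P(S_{T_i}=\emptyset)\to 1$. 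Concretely, your intermediate unconditional bound reads $\sum_t P^t(x_0,y)\le\frac{d}{\pi(x_0)}\sum_i\E[L_{i+1}]$, whose right-hand side is automatically infinite because $L_{i+1}\ge 1$ for every $i$; no downstream manipulation of that quantity can recover a finite bound, so the error cannot be fixed by adjusting constants. The repair is exactly the indicator the paper carries through its proof: since $Q_s(\emptyset,y)=0$, one has the sharper bound $Q_s(S,y)\le\pi(y)\,\ind(S\neq\emptyset)$, hence $\sum_{s=0}^{L_{i+1}-1}Q_s(S_{T_i},y)\le d\,L_{i+1}\,\ind(S_{T_i}\neq\emptyset)$, and the elementary inequality is then applied only on the event $\{S_{T_i}\neq\emptyset\}$, where taking $u=\pi(S_{T_i})\ge 1$ is legitimate. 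With that one correction, your argument coincides with the paper's.
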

\begin{proof}
	Note that for any $m\geq 0$, $S'\subset V$ and $y\in V$, we have $Q_m(S',y) \leq Q_m(V,y)=\pi(y)$. The equality in the preceding line is due to the fact that $\pi$ is a stationary measure. Moreover, $Q_m(\emptyset,y)=0$. Therefore, we have
	\begin{equation}\label{misc4}
		Q_{t-T_{a(t)}}(S_{T_{a(t)}},y) \leq Q_{t-T_{a(t)}}(V,y)\ind\big(S_{T_{a(t)}}\neq \emptyset\big) = \pi(y)\ind\big(S_{T_{a(t)}}\neq \emptyset\big).
	\end{equation}
	Using (\ref{misc4}) above and Corollary \ref{ProbPowers}, we get
	\begin{align}
		\sum_{t=0}^\infty P^t (x_0, y)& = \frac{1}{\pi(x_0)}\sum_{t=0}^\infty \E\big[Q_{t-T_{a(t)}}(S_{T_{a(t)}},y) \big]\nonumber\\
		&\leq \frac{\pi(y)}{\pi(x_0)}\sum_{t=0}^{\infty}\E\big[\ind\big(S_{T_{a(t)}}\neq \emptyset\big)\big]\nonumber\\
		&\leq d \sum_{t=0}^{\infty}\E\big[\ind\big(S_{T_{a(t)}}\neq \emptyset\big)\big],\label{misc5}
	\end{align}
	where the last inequality follows from the fact that $1\leq \pi(z)\leq d$ for any $z$. Next,
        note that when $T_{m}\leq t < T_{m+1}$, we have $a(t)=m$ and hence
	\begin{equation*}
		\sum_{t=0}^\infty \ind\big(S_{T_{a(t)}}\neq \emptyset\big)= \sum_{i=0}^\infty (T_{i+1}-T_i) \ind\big(S_{T_i}\neq \emptyset\big) = \sum_{i=0}^\infty L_{i+1} \ind\big(S_{T_i}\neq \emptyset\big).
	\end{equation*}
	Taking the value of $L_i$ from (\ref{dagger}) and combining with (\ref{misc5}) gives
	\begin{equation}\label{misc6}
          \sum_{t=0}^\infty P^t (x_0, y) \leq 
          d\cdot \E\Bigg[\sum_{i=0}^\infty 2\bigg\lceil\frac{\log(8\cdot\pi(S_{T_{i}}))}{C}\bigg\rceil \ind\big(S_{T_i}\neq \emptyset\big)\Bigg].
	\end{equation}
	Now observe the following fact about the natural logarithm which will be useful for bounding the above.
	\begin{equation}\label{misc3}
		4\sqrt{x}\geq \ceil{\log(8x)} \quad\text{for } x\geq1.
	\end{equation}
	To prove this, first note that $4\sqrt{x}\geq \log(8x)+1$ for $x\geq 1$. This is true since the
        inequality
        holds for $x=1$ and ${d \over dx}(4\sqrt{x}-\log(8x)-1) = {2 \over \sqrt{x}} - {1 \over x} \geq 0$ when $x\geq 1$.
        Since $\ceil{\log(8x)}\leq \log(8x)+1$, the previous
        inequality gives us (\ref{misc3}).\\
	Thus (\ref{misc6}) and (\ref{misc3}) imply that
	\begin{align*}
          \sum_{t=0}^\infty P^t (x_0, y)
          &\leq 8d\bigg\lceil\frac{1}{C} \bigg\rceil
          \E \bigg[\sum_{i=0}^\infty \sqrt{\pi(S_{T_i})}
          \ind\big(S_{T_i}\neq \emptyset\big)\bigg]
          \\
          &= 8d\bigg\lceil\frac{1}{C}\bigg\rceil \sum_{i=0}^\infty \E \Big[\sqrt{\pi(S_{T_i})} \Big].
	\end{align*}
\end{proof}

\section{Decay of $\E \sqrt{\pi(S_{T_i})}$}
We will show that $\E\sqrt{\pi(S_{T_i})}$ decays exponentially in $i$, by proving the following theorem.
\begin{theorem}\label{CondDecay}
For all $m \geq 1$, there exists a constant $0\leq \alpha <1$ depending only on $C$ and $d$ such that
\begin{equation}
\E\Big[\sqrt{\pi(S_{T_m})}\ \Big| \ S_{T_{m-1}} \Big] \leq  \alpha\cdot \sqrt{\pi (S_{T_{m-1}})}.
\end{equation}
\end{theorem}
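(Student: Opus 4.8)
The plan is to fix the previous set $S := S_{T_{m-1}}$, write $L := L_m$ (a function of $S$) and $U := U_m$, so that the new set is $S' := S_{T_m} = \{y : Q_L(S,y) \geq U\pi(y)\}$, and to bound $\E[\sqrt{\pi(S')} \mid S]$. The case $S=\emptyset$ is trivial (then $S'=\emptyset$ a.s.), so assume $S\neq\emptyset$, whence $\pi(S)\geq 1$. For each $y$ set $\rho(y) := Q_L(S,y)/\pi(y)\in[0,1]$, so that $S' = \{y : \rho(y)\geq U\}$ and $\pi(S') = g(U)$, where $g(u) := \pi(\{y : \rho(y)\geq u\})$. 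The heart of the argument is to show that $\rho(y)$ is bounded away from $1$ \emph{uniformly} in $y$, by a constant depending only on $C$ and $d$; the square-root decay then follows from a one-line Cauchy--Schwarz estimate.

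First I would use reversibility. Since $\pi=\degree$ makes simple random walk reversible, $\pi(x)p^L(x,y)=\pi(y)p^L(y,x)$, and hence $Q_L(S,y)=\sum_{x\in S}\pi(x)p^L(x,y)=\pi(y)\,p^L(y,S)$, i.e. $\rho(y)=p^L(y,S)$. Next I would apply the escape estimate (\ref{ProbLeakage}) with starting vertex $y$ and with the set $A=S$: because the linear entropy condition holds with the \emph{same} $C$ at every starting vertex, the Lemma applies verbatim to the walk from $y$, and since its $A^c=\supp_y(L)\setminus S\subseteq S^c$ we get $p^L(y,S^c)\geq (CL-\log 2|S|)/(L\log d)$. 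Using $|S|\leq\pi(S)$ together with the defining choice $L=2\lceil \log(8\pi(S))/C\rceil$, one has $CL\geq 2\log(8\pi(S))$ while $L\leq 2\log(8\pi(S))/C+2$; substituting bounds the numerator below by $\log(32\pi(S))$ and gives
\[
\rho(y)=p^L(y,S)=1-p^L(y,S^c)\leq 1-\epsilon,\qquad \epsilon:=\frac{C}{2\log d}\cdot\frac{\log 8}{\log 8+C}>0,
\]
a constant depending only on $C$ and $d$.

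Finally I would convert the uniform bound $\rho(y)\leq 1-\epsilon$ into the claimed decay. By stationarity $\int_0^1 g(u)\,du=\sum_y\pi(y)\rho(y)=\sum_y Q_L(S,y)=\pi(S)$, and since $U$ is uniform, $\E[\sqrt{\pi(S')}\mid S]=\int_0^1\sqrt{g(u)}\,du$. Because $\rho(y)\leq 1-\epsilon$ for every $y$, we have $g(u)=0$ for $u>1-\epsilon$, so Cauchy--Schwarz gives
\[
\int_0^1\sqrt{g(u)}\,du=\int_0^{1-\epsilon}\sqrt{g(u)}\,du\leq\sqrt{1-\epsilon}\,\Big(\int_0^{1-\epsilon}g(u)\,du\Big)^{1/2}\leq\sqrt{(1-\epsilon)\,\pi(S)},
\]
which is the theorem with $\alpha=\sqrt{1-\epsilon}$.

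The step I expect to be the main obstacle is producing the uniform constant $\epsilon$: the escape bound $(CL-\log 2|S|)/(L\log d)$ has \emph{both} numerator and denominator growing like $\log\pi(S)$, so a careless estimate would let $\epsilon\to 0$ as $\pi(S)\to\infty$. The point of choosing $L$ proportional to $\log(8\pi(S))/C$ is precisely to make these two logarithms cancel and keep $\epsilon$ bounded below. It is also worth flagging that invoking the Lemma at \emph{every} starting vertex $y$ is exactly where the hypothesis that $C$ is independent of the starting point is used, consistent with the counterexample promised in the introduction.
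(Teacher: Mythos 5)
Your proof is correct, and it takes a genuinely different route from the paper's. The paper's argument never looks at individual vertices: it uses the martingale property $\E[\pi(S_{T_m})\mid S_{T_{m-1}}]=\pi(S_{T_{m-1}})$ together with a concavity lemma (Lemma \ref{rootdecay}: $\E\sqrt{R}\leq 1-\frac{1}{8}\E[R\,\ind(R\geq 4)]$ for nonnegative $R$ with $\E R=1$), and then lower-bounds $\E[R\,\ind(R\geq 4)]$ by taking $S_*$ to be the largest member of the nested family with $\pi(S_*)<4\pi(S)$ and applying the set-averaged escape estimate of Corollary \ref{leave} to $Q_{L_m}(S,S_*^c)$, arriving at $\alpha=1-\frac{C}{16\log d}$. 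You instead exploit reversibility of simple random walk with respect to $\pi=\degree$ to write $Q_L(S,y)/\pi(y)=p^L(y,S)$, apply the escape lemma started from each vertex $y$ (valid exactly because $C$ is uniform in the starting vertex, as you correctly flag), and obtain the pointwise uniform bound $\rho(y)\leq 1-\epsilon$ with $\epsilon$ depending only on $C$ and $d$; the layer-cake identity $\int_0^1 g(u)\,du=\pi(S)$ plus Cauchy--Schwarz on $[0,1-\epsilon]$ then give $\alpha=\sqrt{1-\epsilon}$. Your bookkeeping checks out: $CL\geq 2\log(8\pi(S))$ and $|S|\leq\pi(S)$ make the numerator at least $\log(32\pi(S))$, and monotonicity of $t\mapsto t/(t+C)$ with $t=\log(8\pi(S))\geq\log 8$ keeps $\epsilon$ bounded below, which is indeed the crux. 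Comparing the two approaches: yours is shorter, dispenses with both the concavity lemma and the threshold set $S_*$, and proves something strictly stronger along the way (since $\rho\leq 1-\epsilon$ everywhere, the evolving set becomes empty whenever $U_m>1-\epsilon$, i.e., it dies with probability at least $\epsilon$ in every superstep); the paper's approach, on the other hand, makes no use of reversibility, so it would carry over to non-reversible chains or other stationary measures, where your identity $Q_L(S,y)=\pi(y)p^L(y,S)$ is unavailable. Two harmless slips worth noting: the identity $\int_0^1 g(u)\,du=\pi(S)$ comes from row-stochasticity of $p^L$ (equivalently, the evolving-set martingale property), not from stationarity of $\pi$; and your explicit treatment of $S=\emptyset$ is a point in your favor, since $L_m$ is ill-defined in that case and the paper leaves it implicit.
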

We will come back to the proof of theorem \ref{CondDecay} after we prove a lemma.
\begin{lemma}
  \label{rootdecay}
  Let $R$ be a nonnegative random variable and suppose that $\E(R) = 1$. Then
  \[
    \E \sqrt{R}  \leq 1 - {1 \over 8} \E \Bigl(R \, \ind(R \geq 4)\Bigr) \,.
  \]
\end{lemma}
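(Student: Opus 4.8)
The plan is to reduce the whole statement to a single pointwise inequality by means of a convenient algebraic identity for the square root. First I would record the identity
\[
\sqrt{r} = \frac{r+1}{2} - \frac{(\sqrt{r}-1)^2}{2},
\]
which follows at once from expanding $(\sqrt{r}-1)^2 = r - 2\sqrt{r} + 1$. Taking expectations and invoking the hypothesis $\E(R) = 1$ collapses the first term to a constant, so that
\[
\E\sqrt{R} = 1 - \frac{1}{2}\,\E\bigl[(\sqrt{R}-1)^2\bigr].
\]
This already recovers $\E\sqrt{R}\leq 1$ (the Jensen bound), but more importantly it exhibits the deficit $1 - \E\sqrt{R}$ as exactly half of a second moment. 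The claimed lemma is therefore equivalent to the inequality $\E[(\sqrt{R}-1)^2] \geq \tfrac14\,\E[R\,\ind(R\geq 4)]$.

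Next I would establish this reformulated inequality pointwise and then integrate. That is, I would show
\[
(\sqrt{r}-1)^2 \geq \tfrac14\, r\,\ind(r\geq 4) \qquad \text{for all } r\geq 0.
\]
When $r < 4$ the right-hand side vanishes and the bound is trivial since the left-hand side is nonnegative. When $r \geq 4$, substituting $s = \sqrt{r} \geq 2$ turns the claim into $(s-1)^2 \geq \tfrac14 s^2$; as $s - 1 \geq 0$ on this range, this is equivalent to $s - 1 \geq \tfrac12 s$, i.e.\ $s \geq 2$, which is precisely the condition at hand. Taking expectations then gives $\E[(\sqrt{R}-1)^2] \geq \tfrac14\,\E[R\,\ind(R\geq 4)]$.

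Combining this with the identity above yields
\[
\E\sqrt{R} = 1 - \frac12\,\E\bigl[(\sqrt{R}-1)^2\bigr] \leq 1 - \frac18\,\E\bigl[R\,\ind(R\geq 4)\bigr],
\]
which is the desired conclusion. I do not expect a genuine obstacle here: the only real \emph{choice} is to isolate the deficit $1 - \E\sqrt{R}$ as the second moment $\tfrac12\E[(\sqrt{R}-1)^2]$, after which everything reduces to an elementary one-variable estimate. It is worth noting that the threshold $4$ in the indicator is exactly the point where $(\sqrt{r}-1)^2 \geq \tfrac14 r$ first becomes valid, so the pointwise bound is tight at the boundary $r = 4$ and the constant $1/8$ is the natural one produced by this argument.
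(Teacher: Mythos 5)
Your proof is correct, and it takes a genuinely different route from the paper's. The paper argues via concavity of $x \mapsto \sqrt{x}$: the tangent line at $x=1$ gives $\sqrt{x} \leq 1 + \frac{1}{2}(x-1)$ for all $x \geq 0$, while the chord through $(1,1)$ and $(4,2)$ gives $\sqrt{x} \leq 1 + \frac{1}{3}(x-1)$ for $x \geq 4$; combining the two, taking expectations, and then using $(R-1)\,\ind(R \geq 4) \geq \frac{3}{4}R\,\ind(R \geq 4)$ yields the constant $\frac{1}{8} = \frac{1}{6}\cdot\frac{3}{4}$. You instead complete the square: the identity $\sqrt{r} = \frac{r+1}{2} - \frac{(\sqrt{r}-1)^2}{2}$ together with $\E(R)=1$ expresses the Jensen deficit exactly as $1 - \E\sqrt{R} = \frac{1}{2}\E\bigl[(\sqrt{R}-1)^2\bigr]$, reducing the lemma to the pointwise bound $(\sqrt{r}-1)^2 \geq \frac{1}{4}r$ for $r \geq 4$, which you verify by substituting $s=\sqrt{r}$. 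What your route buys is transparency about the constant: the argument is an identity until the final pointwise step, that step is tight exactly at $r=4$, and indeed $\frac{1}{8}$ is the best possible constant (take $R$ equal to $4$ with probability $p$ and to a value near $1$ otherwise, and let $p \to 0$; the deficit is $\approx \frac{1}{2}p$ against $\E[R\,\ind(R\geq 4)] = 4p$). What the paper's route buys is a template: the two-tangent-line (or tangent-plus-chord) argument uses only concavity and so adapts immediately to other concave functions or other thresholds, without needing an algebraic identity special to the square root. Both proofs are elementary, of comparable length, and arrive at the same sharp constant.
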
  
\begin{proof}
Let $f(x) = \sqrt{x}$. 
Since $f$ is concave and $f'(1) = {1 \over 2}$, we have
  \[
    \sqrt{x} \leq 1 + {1 \over 2} (x-1) \;\; \mbox{for all $x \geq 0$.}
    \]
Since $f'(4) = {1 \over 4} < {1 \over 3}$, concavity of $f$ implies that
  \[
    \sqrt{x} \leq 1 + {1 \over 3} (x-1) \;\; \mbox{for all $x \geq 4$.}
    \]
    It follows that
    \[
      \sqrt{R} \leq 1 + {1 \over 2}(R-1) - {1 \over 6}(R-1) \ind(R \geq 4). 
      \]
      Taking expectations gives
      \begin{eqnarray*}
        \E \sqrt{R} &\leq& 1 - {1 \over 6} \E \Bigr( (R-1)\, \ind(R \geq 4) \Bigr) \\
                    &\leq& 1 - {1 \over 8} \E \Bigr( R \, \ind(R \geq 4) \Bigr),
      \end{eqnarray*}                    
      where the last line follows from the fact that
      \[
        (R-1) \, \ind(R \geq 4) \geq 
        {3 \over 4} R \, \ind(R \geq 4) \,.
      \]
      
\end{proof}
\begin{proofofthm}{\ref{CondDecay}}
  We consider one ``superstep'' of the evolving set process,
  that is, the update from time $T_m$ to time $T_{m+1}$. 
Suppose that 
$S_{T_m} = S$ and let $\St$ be the value of $S_{T_{m+1}}$, namely
\[
  \St =  \{y: Q_{L_m}(S,y)\geq U_m \pi(y)\},
\]
where $U_m$ is a uniform random variable. Define
$R := {\pi(\St) \over \pi(S)}$.
  Note that $\E R = 1$, since
  \begin{eqnarray*}
    \E \pi(\St) &=& \sum_{y \in V} \pi(y) \P(y \in \St) \\
                &=& \sum_{y \in V} \pi(y) {Q_{L_m}(S, y) \over \pi(y)} \\
                &=& Q_{L_m}(S, V) = \pi(S)
                    \;.     
  \end{eqnarray*}
  (This is analogous to the martingale property of the evolving sets of
  \cite{evolvingsets}.)
  Thus, we can apply lemma \ref{rootdecay}.
  With this in mind, we aim for a lower bound
  on $\E \Bigl( R \ind(R \geq 4) \Bigr)$.

  Let $S_u$ be the value of $\St$ when $U_m = u$. That is,
\[
  S_u =  \{y: Q_{L_m}(S,y)\geq u \pi(y)\}.
\]
Let $\sets$ be the collection of sets $\{ S_u : u \in [0,1]\}$.
Note that the collection $\sets$ 
is nested. Furthermore, note that
$\{S' \in \sets : \pi(S') < 4\pi(S) \}$
is a finite subset of $\sets$, so it has a largest element,
which we denote by $S_*$. This set $S_*$ has the
property that $\pi(S_*) < 4\pi(S)$, but any
set in $\sets$ that strictly contains $S_*$ has measure
at least $4 \pi(S)$. We have
\begin{eqnarray}
  \E \Bigl( R \ind(R \geq 4) \Bigr) &=&   \nonumber
     {1 \over \pi(S)} \E \Bigl( \pi(\St) \ind( \St \supsetneq S_*) \Bigr) \\
                        &\geq& {1 \over \pi(S)} \E( \pi( \St - S_*))   \label{abba}
\end{eqnarray}
But
\begin{eqnarray}
  \nonumber
  \E( \pi( \St - S_*))  &=& \int_0^1 \sum_{y \in S_*^c} \ind(y \in S_u) \pi(y) \,du \\
  &=& \int_0^1 \sum_{y \in S_*^c} \ind\Bigl( {Q_{L_m}(S, y) \over \pi(y)} \geq u \Bigr) \pi(y) \,du .   \label{f}
\end{eqnarray}
Since the indicator is nonnegative, we can apply Fubini and write the quantity (\ref{f})
as
\begin{eqnarray}
\sum_{y \in S_*^c}  \Bigl[ \int_0^1  \ind\Bigl( {Q_{L_m}(S, y) \over \pi(y)} \geq u \Bigr) \,du \Bigr] \pi(y)  &=&   \nonumber 
\sum_{y \in S_*^c}  \Bigl[ {Q_{L_m}(S, y) \over \pi(y)} \Bigr] \pi(y)  \\
                                              &=& Q_{L_m}(S, S_*^c)     \nonumber    \\
                      &\geq&
                  \pi(S) \frac{C L_m - \log(2|S_*|)}{L_m \log d},  \label{gap}
\end{eqnarray}
by Corollary \ref{maincor}. Now, using the fact that
$\pi(S_*) < 4 \pi(S)$ and the definition of $L_m$ gives
\begin{eqnarray*}
  \log(2|S_*|) &\leq&  \log(8 \pi(S)) \\
               &=& \half C \cdot {2 \log(8 \pi(S)) \over C} \\
               &\leq& \half C L_m  .
\end{eqnarray*}
It follows that the quantity (\ref{gap}) is at least
\[
  \pi(S) {C \over 2 \log d},
\]
Combining this with (\ref{abba}) gives
\[
  \E \Bigl( R \ind(R \geq 4) \Bigr) \geq {C \over 2 \log d} \,.
\]
Hence Lemma \ref{rootdecay} implies that the Theorem holds with the constant $\alpha = 1 - {C \over 16 \log d}$. 

\end{proofofthm}
By induction, we have the following corollary.
\begin{corollary}\label{maincor}
	Let $\pi,d,C, S_{T_j}$ be as in section \ref{EvolvingDef}. Then,
	\begin{equation}
	\E\Big[\sqrt{\pi(S_{T_m})} \Big] \leq \alpha^m\cdot \pi(x_0)
	\end{equation}
	where $\alpha < 1$ is a constant that depends only on $C$ and $d$. 
      \end{corollary}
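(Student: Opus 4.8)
The plan is to obtain the corollary from Theorem \ref{CondDecay} by a straightforward induction on $m$, using the tower property of conditional expectation. The genuine analytic content — the one-step contraction of $\sqrt{\pi(\cdot)}$ in expectation — is already established in the theorem; what remains here is essentially bookkeeping, so I would organize the argument to make the iteration transparent.

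First I would handle the base case $m=0$. Since the initial state of the intermittant evolving set process is $S_{T_0}=\{x_0\}$, we have $\pi(S_{T_0})=\pi(x_0)=\degree(x_0)$, and therefore $\E\big[\sqrt{\pi(S_{T_0})}\big]=\sqrt{\pi(x_0)}$. Because every vertex has degree at least $1$, we have $\pi(x_0)\geq 1$, so $\sqrt{\pi(x_0)}\leq \pi(x_0)=\alpha^0\,\pi(x_0)$, which is exactly the claimed bound for $m=0$. This step also explains why the right-hand side of the corollary carries a factor of $\pi(x_0)$ rather than $\sqrt{\pi(x_0)}$: the passage from $\sqrt{\pi(x_0)}$ to $\pi(x_0)$ via $\pi(x_0)\geq 1$ is what lets the clean bound $\alpha^m\pi(x_0)$ hold for every $m$.

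For the inductive step, assume the bound holds for $m-1$. I would condition on $S_{T_{m-1}}$ and apply Theorem \ref{CondDecay} inside the expectation:
\begin{align*}
  \E\Big[\sqrt{\pi(S_{T_m})}\Big]
  &= \E\Big[\E\big[\sqrt{\pi(S_{T_m})}\;\big|\;S_{T_{m-1}}\big]\Big] \\
  &\leq \E\Big[\alpha\,\sqrt{\pi(S_{T_{m-1}})}\Big] \\
  &= \alpha\,\E\Big[\sqrt{\pi(S_{T_{m-1}})}\Big] \\
  &\leq \alpha\cdot\alpha^{m-1}\pi(x_0)=\alpha^m\pi(x_0),
\end{align*}
where the first equality is the tower property, the first inequality is Theorem \ref{CondDecay}, and the final inequality is the inductive hypothesis. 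Since the same constant $\alpha=1-\tfrac{C}{16\log d}<1$ from the theorem is used at every step, it depends only on $C$ and $d$, as required, and the induction closes.

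I do not anticipate a real obstacle in this corollary, precisely because all of the probabilistic and isoperimetric work is packaged into the conditional statement of Theorem \ref{CondDecay}. The only point demanding a moment's care is the base case, where one must invoke $\pi(x_0)=\degree(x_0)\geq 1$ to replace $\sqrt{\pi(x_0)}$ by $\pi(x_0)$; everything after that is a mechanical application of the tower property.
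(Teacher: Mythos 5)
Your proof is correct and is exactly the induction the paper has in mind (the paper simply states ``By induction, we have the following corollary'' without writing out the details): the tower property combined with Theorem \ref{CondDecay} for the inductive step, and the observation that $\pi(x_0)=\degree(x_0)\geq 1$ so that $\sqrt{\pi(x_0)}\leq\pi(x_0)$ handles the base case and explains the form of the right-hand side. Nothing is missing.
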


The main theorem of this paper can now be easily proved using Lemma \ref{transiencelemma} and Corollary \ref{maincor} as follows.\\ \\
\begin{proofofthm}{\ref{mainthm}}
  To show that any vertex $x_0$ is transient, it is sufficient to show that $\sum_{t=0}^\infty P^t(x_0,x_0)<\infty$ . By Lemma \ref{transiencelemma} and
  Corollary \ref{maincor} we get
	\begin{align*}
	\sum_{t=0}^\infty P^t(x_0,x_0) &\leq 8d \bigg\lceil\frac{1}{C}\bigg\rceil \sum_{i=0}^\infty \E \Big[\sqrt{\pi(S_{T_i})}\Big]\\
	&\leq 8d \bigg\lceil\frac{1}{C}\bigg\rceil\sum_{i=0}^\infty \alpha^i \pi(x_0)<\infty
	\end{align*}
	since $0\leq\alpha<1$. 
\end{proofofthm}

\section{Necessity of the Uniformity of $C$ in the Linear Entropy Growth Condition} \label{Example}
The following example, which was conveyed to us by Gady Kozma,
shows that
the constant $C$ in  Theorem \ref{mainthm} 
cannot be allowed to depend on the starting position
of the walk. Start with $\Z^+$ and then add an edge from each vertex  
$n \geq 1$ to the root of $T_n$,
where $T_n$ is a full binary tree of height $2 \tower n$ (i.e., a tower of powers of height $n$). Call this graph $G$.
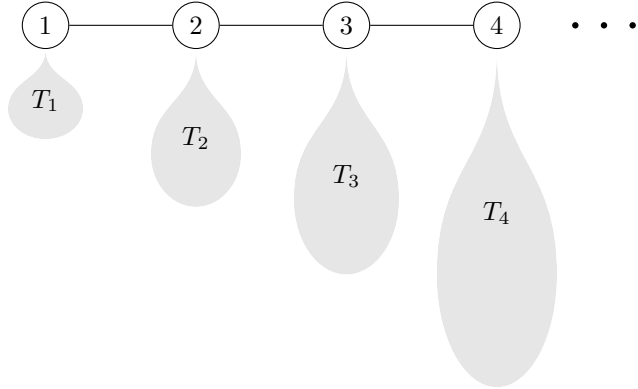
\begin{figure}[!h]
	\centering
	\begin{tikzpicture}
		\node[shape=circle,draw=black] (mid1) at (0,0) {$1$};
		\node[shape=circle,draw=black] (mid2) at (2,0) {$2$};
		\node[shape=circle,draw=black] (mid3) at (4,0) {$3$};
		\node[shape=circle,draw=black] (mid4) at (6,0) {$4$};
		\draw (mid1) -- (mid2)--(mid3)--(mid4); 
		\node at (7.5, 0) {\Huge $\dots$};
		\path (mid1.south) pic[xscale = 5, yscale = 4] {raindrop};
		\path (mid2.south) pic[xscale = 6, yscale = 7] {raindrop};
		\path (mid3.south) pic[xscale = 7, yscale = 10] {raindrop};
		\path (mid4.south) pic[xscale = 8, yscale = 15] {raindrop};
		\node at (0,-1) {$T_1$};
		\node at (2,-1.5) {$T_2$};
		\node at (4,-2) {$T_3$};
		\node at (6,-2.5) {$T_4$};
	\end{tikzpicture}  \\
	\caption{The Graph $G$}
\end{figure}
Then we will see that although the random walk on $G$ is recurrent, the entropy of the random walk grows at least linearly
for any starting point. 
First, note that simple random walk
on $G$,
which we will denote by $(X_t)_{t\geq 0}$,
is recurrent, since the effective resistance between any vertex and infinity is infinite. (See \cite{probontrees}
for background on random walks and electrical networks; or for an alternative argument, note that if 
$T_1 < T_2 < \cdots$ are the times
when the walk is at one of the vertices $n \in \Z^+$, then $X_{T_t}$ is a simple random walk on the positive integers, which is recurrent.) \\
\\
It remains to show is that for simple random walk on $G$, the entropy grows at least linearly.
For $n \geq 1$ let $V_n$ be the union of $\{n\}$ and vertex set of $T_n$, and define $S_n = V_1 \cup \cdots \cup
V_n$. Let $x_0$ be the starting state of the walk and define
\[
  l = \min\{n: x_0 \in S_n \} \,.
\]
We shall find a linear lower bound on the entropy of $X_t$ that holds whenever $t$ is
sufficiently large.
For $t \geq 8 |S_l|$, define
\[
  k_0 := \max \Big \{ k: 2|S_k|^2 \leq {t \over 4} \Bigl \}.
  \]
  A calculation shows that the heights of the trees $T_n$ grow sufficiently fast so that
  for sufficiently large $t$ we have $\diam(T_{k_0+2}) \geq t$.
Let $T_{k_0} = \min \{t: X_t = k_0 \}$ be  the hitting time of ${k_0}$. 
Since for any tree with $i$ vertices, the maximum expected  
hitting time of any vertex is at most $2i^2$ (see Chapter 10 of \cite{lpw}), 
we have
\[
\E T_{k_0} \leq 2 |S_{k_0}|^2 \leq {t \over 4} \,,
\]
and hence 
\[
\P\left( T_{k_0} \geq {t \over 2}\right) \leq {1 \over 2}
\]
by Markov's inequality. After the walk hits $k_0$ the probability that 
it goes to $k_0 + 2$ in the next two steps, and then crosses the edge into $T_{k_0+2}$
in the step after that,
is also bounded away from zero. Finally, the probability that
the remaining steps up to time $t$ are all spent in $T_{k_0 + 2}$
is also bounded away from zero, because when the walk is in  $T_{k_0 + 2}$, the 
distance from $k_0+ 2$ behaves like a ${2 \over 3} \uparrow, {1 \over 3} \downarrow$ 
random walk until the walk hits the $k_0+2$ again.
Putting this all together, we see that the probability that the walk
performs at least ${t \over 2} - 3$ steps in $T_{k_0 + 2}$ is bounded away from zero,
and hence the
probability that
$X_t$ is a vertex in $T_{k_0+2}$ at distance at least ${t \over 12}$ from $k_0+2$
is
bounded away from zero.

Since for any $d$ the walk is equally likely to be at any of the $2^d$ vertices at distance $d$ from $k_0+2$ in $T_{k_0+2}$,
this implies that if $t$ is sufficiently large then $E_t \geq Ct$
for a universal constant
$C$. Since $E_t > 0$ for all $t > 0$ there is a constant
$C_{x_0}$, depending on the starting state $x_0$, such that
\[
  E_t \geq C_{x_0} t
\]
for all $t \geq 0$. 

\section{Acknowledgments}
We are grateful to Itai Benjamini for sharing this problem with us and we thank Gady Kozma for sharing the counterexample
of Section \ref{Example}.

\bibliographystyle{unsrtnat}

\end{document}